\newtheorem{prop}{Proposition}
\newtheorem{lemma}{Lemma}
\newtheorem{theorem}{Theorem}
\newtheorem{cor}{Corollary}
\newtheorem{conj}{Conjecture}
\numberwithin{cor}{section} \numberwithin{theorem}{section}
\numberwithin{lemma}{section} \numberwithin{prop}{section}
\DeclareMathOperator{\lcm}{lcm}
\newcommand{\bnk}{\binom {[n]}{k}}
\newcommand{\A}{A}
\newcommand{\Z}{\mathbb{Z}}
\newcommand{\EE}{\mathbb{E}}
\newcommand{\PP}{\mathbb{P}}
\title{The least common multiple of random sets of positive integers}
\author[J. Cilleruelo]{Javier Cilleruelo}
\address{J. Cilleruelo: Instituto de Ciencias Matem\'aticas
(CSIC-UAM-UC3M-UCM) and Departamento de Matem\'aticas, Universidad
Aut\'onoma de Madrid, 28049 Madrid, Spain }
\email{franciscojavier.cilleruelo@uam.es}
\author[J. Ru\'e]{Juanjo Ru\'e}
\address{J. Ru\'e: Institut f\"ur Mathematik, Freie Universit\"at Berlin, Arnimallee 3-5, D-14195 Berlin, Germany} \email{ jrue@zedat.fu-berlin.de}
\author[P. \v Sarka]{Paulius \v Sarka}
\address{P. \v Sarka: Institute of Mathematics and Informatics,
Akademijos
4, Vilnius LT-08663, Lithuania and Department of Mathematics and
Informatics, Vilnius
University, Naugarduko 24, Vilnius LT-03225, Lithuania}
\email{paulius.sarka@gmail.com}
\author[A. Zumalac\'{a}rregui]{Ana Zumalac\'arregui}
\address{A. Zumalac\'{a}rregui: Instituto de Ciencias Matem\'aticas
(CSIC-UAM-UC3M-UCM) and Departamento de Matem\'aticas, Universidad
Aut\'onoma de Madrid, 28049 Madrid, Spain }
\email{ana.zumalacarregui@uam.es}
\begin{document}

\begin{abstract}
We study the typical behavior
of  the least common multiple of the elements of a random subset
$\A\subset \{1,\dots , n\}$. For example we prove that $\lcm\{a:\ a\in A\}=2^{n(1+o(1))}$ for almost all subsets $A\subset\{1,\dots ,n\}$.
\end{abstract}
\maketitle
\section{Introduction}\label{sec:intro}
The function $\psi(n)=\log \lcm \left \{m:\ 1\le m\le n \right\}$
was introduced by Chebyshev in his study on the distribution of the
prime numbers. It is a well known fact that the asymptotic relation
$\psi(n)\sim n$ is equivalent to the Prime Number Theorem,  which was proved independently by J. Hadamard and C.J. de la Vall\'ee Poussin.

In the present paper, instead of considering the whole set $\{1,\dots,n\}$, we study the typical behavior of the quantity $\psi(\A):=\log \lcm \{a:\ a\in \A\}$ for a random set $\A$ in $\{1,\dots ,n\}$ when $n\to \infty$. We define $\psi(\emptyset)=0$. We consider two natural models.

In the first one, denoted by $B(n,\delta)$,
 each element in $\A$ is chosen independently at random in $\{1,\dots, n\}$ with probability  $\delta=\delta(n)$, typically a function of $n$.
 \begin{theorem}\label{TH1}If $\delta=\delta(n)<1$ and $\delta n\to \infty$ then
$$\psi(\A)\sim n\frac{\delta\log(\delta^{-1})}{1-\delta}
$$
asymptotically almost surely in $ B(n,\delta)$ when $n\to \infty$ .
\end{theorem}
The case $\delta=1$ corresponds to  the classical Chebyshev function and its asymptotic estimate appears as the limiting case, as $\delta$ tends to 1, in Theorem \ref{TH1}, since $\lim_{\delta \to 1}\frac{\delta\log(\delta^{-1})}{1-\delta}=1$.

When  $\delta=1/2$ all the subsets $A\subset \{1,\dots,n\}$ are chosen with the same probability and Theorem \ref{TH1} gives the following result.
\begin{cor}
For almost all sets $\A\subset \{1,\dots,n\}$ we have that $$\lcm\{a:\ a\in A\}=2^{n(1+o(1))}.$$
\end{cor}
For a given positive integer $k=k(n)$, again typically a function of $n$, we consider the second model, where each subset of $k$ elements is chosen uniformly at random among  all sets of  size $k$ in $ \{1,\dots, n\}$. We denote this model by $S(n,k)$.

When $\delta=k/n$ the heuristic suggests that both models are quite similar. Indeed, this is the strategy we follow to prove Theorem \ref{TH2}.
\begin{theorem}\label{TH2}For $k=k(n)< n$ and $k\to\infty$ we have
$$\psi(\A)= k\frac{\log(n/k)}{1-k/n}\left (1+O(e^{-C\sqrt{\log k}})\right )
$$ almost surely in $S(n,k)$ when $n\to \infty$  for some positive constant $C$.
\end{theorem}
The case $k=n$, which corresponds to Chebyshev's function, is also obtained as a limiting case in Theorem \ref{TH2} in the sense that $\lim_{k/n\to 1}\frac{\log(n/k)}{1-k/n}=1$.

This work has been motivated by a result of the first author about the asymptotic behavior of $\psi(\A)$ when $\A=A_{q,n}:=\{q(m):\ 1\le q(m)\le n\}$ for a quadratic polynomial $q(x)\in \Z[x]$.  We wondered if that behavior was typical among the sets $\A\subset \{1,\dots ,n\}$ of similar size. We analyze this issue in the last section.
\section{Chebyshev's function for random sets in $B(n,\delta)$. Proof of Theorem
\ref{TH1}}\label{sec:expectation}
 The following lemma provides us
with an explicit expression for $\psi(\A)$ in terms of the Mangoldt function $$\Lambda(m)=\begin{cases}\log p\quad \text{ if } m=p^k \text{ for some }k\ge 1\\ 0,\qquad \text{ otherwise}.\end{cases}$$
\begin{lemma}\label{general}For any set of positive integers $\A$ we
have
$
 \psi(\A)= \sum_{m}\Lambda(m)I_{\A}(m),
$
where
$\Lambda$ denotes the classical Von Mangoldt function and
$$
I_{\A}(m)=\left\{\begin{array}{lc} 1 \qquad \text{ if
}\A\cap\{m,2m,3m,\dots\}\neq \emptyset, \\ 0 \qquad \text{ otherwise.}
\end{array}\right.
$$
\end{lemma}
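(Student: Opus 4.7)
The plan is to unpack the right-hand side using the definition of $\Lambda$ and compare it term-by-term with the prime factorization of $\lcm\{a:a\in\A\}$. Since $\Lambda(m)=\log p$ exactly when $m=p^k$ for a prime $p$ and integer $k\ge 1$, and vanishes otherwise, the sum collapses to
$$\sum_m \Lambda(m) I_\A(m) = \sum_p (\log p)\sum_{k\ge 1} I_\A(p^k),$$
so the claim is equivalent to showing $\psi(\A)$ equals this double sum.

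To establish this, I would first recall the defining property of the lcm: for every prime $p$,
$$v_p\bigl(\lcm\{a:a\in\A\}\bigr) \;=\; \max_{a\in\A} v_p(a),$$
so that $\psi(\A)=\sum_p (\log p)\,\max_{a\in\A} v_p(a)$. The next step is to rewrite each such maximum as a count of prime-power divisibilities. By the definition of $I_\A$, the condition $I_\A(p^k)=1$ means that some element of $\A$ is divisible by $p^k$, and these indicators form a nonincreasing sequence in $k$ (if $p^{k+1}$ divides some $a\in\A$, then $p^k$ does too). Therefore
$$\max_{a\in\A} v_p(a) \;=\; \#\{k\ge 1 : I_\A(p^k)=1\} \;=\; \sum_{k\ge 1} I_\A(p^k).$$
Substituting this identity back and summing over all primes yields exactly the claimed formula.

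I do not expect any genuine obstacle: the lemma is essentially a restatement of the fact that the lcm is determined by the maximal prime-power divisors appearing in $\A$, combined with the fact that $\log$ converts a product of prime powers into a weighted sum. The one point requiring a moment of care is the passage from $\max$ to $\sum$ via monotonic indicators; once that is observed, the rest is bookkeeping. Finiteness of the sums is automatic when $\A$ is a bounded subset of $\mathbb{Z}^+$, which is the only setting used in the sequel.
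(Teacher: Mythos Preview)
Your proof is correct and follows essentially the same approach as the paper: the paper's one-line argument is the observation that $p^k\mid \lcm\{a:a\in\A\}$ if and only if $I_\A(p^s)=1$ for all $s=1,\dots,k$, which is exactly your monotonicity remark combined with the identity $\max_{a\in\A}v_p(a)=\sum_{k\ge1}I_\A(p^k)$. You have simply spelled out in detail what the paper compresses into a single sentence.
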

\begin{proof} We observe that for any positive integer $l$, the number $\log l$ can be written as $\log l=\sum_{p^k\mid l}\log p$, where the sum is taken over all the powers of primes. Thus, using that $p^k|\lcm  \{a: a\in \A\}$ if and only if $\A\cap\{p^k, 2p^k,3p^k,\dots\}\neq \emptyset $, we get
$$\log \lcm (a:\ a\in A)=\sum_{p^k\mid \lcm (a:\ a\in A)}\log p=\sum_{p^k}(\log p) I_A(p^k)=\sum_m\Lambda(m)I_A(m).$$
\end{proof}
Note that if $\A=\{1,\dots,n\}$  then $\psi(\A)=\sum_{m\le
n}\Lambda(m)$ is the classical Chebychev function $\psi(n)$.
\subsection{Expectation}
First of all we give an explicit expression for the expected value of
the random variable $X=\psi(\A)$ where $\A$ is a random set in $B(n,\delta)$.
\begin{prop}\label{expectation} For the random variable $X=\psi(\A)$ in $B(n,\delta)$ we have
$$\EE\left(X\right)=n\frac
{\delta\log(\delta^{-1})}{1-\delta}+\delta\sum_{r\geq 1}
R\left(\frac{n}{r}\right) (1-\delta)^{r-1},$$
where $R(x)=\psi(x)-x$ denotes the error term in
the Prime Number Theorem.
\end{prop}
\begin{proof} The ambiguous case $\delta=1$ must be understood as the limit as $\delta\to 1$, which recovers the equality $\psi(n)=n+R(n)$. In the following we assume that $\delta<1$.
By linearity of the expectation, Lemma \ref{general} clearly implies $$
\EE(X)=\sum_{m\le
n}\Lambda(m)\EE(I_{\A}(m)).$$ Since $
\EE(I_{\A}(m))=\PP(\A\cap \{m,2m,\dots\}\ne \emptyset)=1-\prod_{r\le
n/m}\PP(rm\not \in \A)=1-(1-\delta)^{\lfloor n/m\rfloor}
$, we obtain
\begin{equation}\label{E}
\EE(X)=\sum_{m\le n}\Lambda(m)\left (1-(1-\delta)^{\lfloor
n/m\rfloor}\right ).
\end{equation}
 We observe that $\lfloor n/m
\rfloor=r$ whenever $\frac{n}{r+1}<m \leq\frac nr $, so we split the
sum into intervals $J_r=(\frac{n}{r+1},\frac nr ]$, obtaining
\begin{align*}
  \EE\left(X\right) &=\sum_{r\geq 1} (1-(1-\delta)^r) \sum_
{m\in J_r} \Lambda(m)\\
&= \sum_{r\geq 1} (1-(1-\delta)^r) \bigg(\psi\Big(\frac
nr\Big)-\psi\Big(\frac {n}{r+1}\Big)\bigg)\\
&= \delta \sum_{r\geq 1}\psi\Big(\frac nr\Big) (1-\delta)^{r-1}\\
&= \delta n \sum_{r\geq 1} \frac {(1-\delta)^{r-1}}{r} + \delta
\sum_{r\geq 1}
R\left(\frac{n}{r}\right)(1-\delta)^{r-1}.
\\
&= n\frac
{\delta\log(\delta^{-1})}{1-\delta}+ \delta
\sum_{r\geq 1}
R\left(\frac{n}{r}\right)(1-\delta)^{r-1}.
\end{align*}
\end{proof}
\begin{cor}\label{coro} If $\delta=\delta(n)<1$ and $\delta n\to \infty$ then
$$\EE\left (X\right
)= n\frac{\delta\log(\delta^{-1})}{1-\delta}\left (1+O\left (e^{-C\sqrt{\log (\delta n)}}\right )\right ).
$$
for some constant $C>0$.
\end{cor}
\begin{proof}
We estimate the absolute value of sum appearing in Proposition \ref{expectation}. For any positive integer $T$ and using that $|R(y)|<2y$ for all $y>0$  we have
 \begin{align*}\sum_{r\ge 1}|R\left(n/r\right)|
(1-\delta)^{r-1} =& \sum_{1\le r\le
T}|R\left(n/r\right)|(1-\delta)^{r-1}
+\sum_{r\ge T+1
}|R\left(n/r\right)|(1-\delta)^{r-1}\\
\le & n\sum_{1\le r\le
T}\frac{|R\left(n/r\right)|}{(n/r)}\frac{(1-\delta)^{r-1}}r
+2n\sum_{r\ge T+1
}\frac{(1-\delta)^{r-1}}r\\
\le &\ n\left (\max_{x\ge n/T}\frac{|R(x)|}{x}\right )\sum_{1\le r\le T}\frac{(1-\delta)^{r-1}}r+2n\sum_{r\ge T+1}\frac{(1-\delta)^{r-1}}r\\
\le &\ n \frac{\log (\delta^{-1})}{(1-\delta)}\left (\max_{x\ge n/T}\frac{|R(x)|}{x}\right )+\frac{2n}{T+1}\frac{(1-\delta)^{T}}{\delta}
\end{align*}
Taking into account that $(1-\delta)^{T}<e^{-\delta T}$ and the known estimate $$\max_{x>y}\frac{|R(x)|}x
\ll e^{-C_1\sqrt{\log y}}$$ for the error term in the PNT, we have
 \begin{align*}\sum_{r\ge 1}|R\left(n/r\right)|
(1-\delta)^{r-1} \ll & \ n \frac{\log (\delta^{-1})}{(1-\delta)}e^{-C_1\sqrt{\log(n/T)}}+n\frac{e^{-\delta T}}{\delta T}.
\end{align*}
Thus we have proved that for any positive integer $T$ we have
$$\mathbb E(X)=n\frac{\delta\log(\delta^{-1})}{1-\delta}\left (1+O\left ( e^{-C_1\sqrt{\log(n/T)}}\right )+O\left (\frac{1-\delta}{\log(\delta^{-1})}\frac{e^{-\delta T}}{\delta T}   \right )\right ).$$
We take
$T\asymp \delta^{-1}\sqrt{\log(\delta n)}$ to minimize the error term. To estimate the first error term we observe that $\log(n/T)\gg \log (\delta n/\sqrt{\log(\delta n)})\gg \log (\delta n)$, so $e^{-C_1\sqrt{\log(n/T)}}\ll  e^{-C\sqrt{\log(\delta n)}}$ for some constant $C$. To bound the second error term we simply  observe that $\delta T>1$ and  that $\frac{1-\delta}{\log(\delta^{-1})}\le 1$ and we get a similar upper bound.
\end{proof}
\subsection{Variance}\label{subsec:var}
\begin{prop}\label{var}For the random variable $X=\psi(\A)$ in $B(n,\delta)$ we have
 $$V(X)\ll \delta n \log^2
n.$$
\end{prop}
\begin{proof}By linearity of expectation we have that
\begin{align*}
 V(X)&=\EE\left(X^2\right
)-\EE^2\left(X\right)\\
&=\sum_{m,l\leq n}\Lambda(m)\Lambda(l)\left
(\EE\left(I_{\A}(m)I_{\A}(l)\right)-\EE\left(I_{\A}
(m)\right)\EE\left(I_{\A}(l)\right)\right
).
\end{align*}
We observe that if $\Lambda(m)\Lambda(l)\ne 0$ then $l\mid m,\ m\mid
l$ or $(m,l)=1$. Let us now study the term $\EE(I_{\A}(m)I_{\A}(l))$
in these cases.
\begin{enumerate}[(i)]
 \item If $l\mid m$ then
$$\EE(I_{\A}(m)I_{\A}(l))=1-(1-\delta)^{\lfloor n/m\rfloor}.$$
\item If $(l,m)=1$ then
$$\EE(I_{\A}(m)I_{\A}(l))=1-(1-\delta)^{\lfloor
n/m\rfloor}-(1-\delta)^{\lfloor n/l\rfloor}+
(1-\delta)^{\lfloor n/m\rfloor+\lfloor n/l\rfloor-\lfloor
n/ml\rfloor}.$$
\end{enumerate}
Both of these relations are subsumed in
\begin{equation*}
\EE(I_{\A}(m)I_{\A}(l))=1-(1-\delta)^{\lfloor
n/m\rfloor}-(1-\delta)^{\lfloor n/l\rfloor}+ (1-\delta)^{\lfloor
n/m\rfloor+\lfloor n/l\rfloor-\lfloor n(m,l)/ml\rfloor}.
\end{equation*}
Therefore, it follows from \eqref{E} that for each term in the sum we have
\begin{eqnarray*}\Lambda(m)\Lambda(l)\left(\EE\left(I_{\A}(m)I_{\A}
(l)\right)-\EE\left(I_{\A}(m)\right)\EE\left(I_{\A}(l)\right)\right
)\\
= \Lambda(m)\Lambda(l)(1-\delta)^{\lfloor n/m\rfloor+\lfloor
n/l\rfloor-\lfloor n(m,l)/ml\rfloor} \left (1-(1-\delta)^{\lfloor
n(m,l)/ml\rfloor  } \right ).\end{eqnarray*}
Finally, by using the inequality
$1-(1-x)^r\le rx$ we have
\begin{equation*}
\Lambda(m)\Lambda(l)\left(\EE\left(I_{\A}(m)I_{\A}
(l)\right)-\EE\left(I_{\A}(m)\right)\EE\left(I_{\A}(l)\right)\right
)\le \delta n\frac{\Lambda(l)}{l}\frac{\Lambda(m)}m(m,l),
\end{equation*}
and therefore:
$$
 V(X)\le  2\delta n\sum_{1\le l\le m\le
 n}\frac{\Lambda(l)}{l}\frac{\Lambda(m)}m(m,l).
$$
We now split the sum according to $l\mid m$ or $(l,m)=1$ and estimate each one separately.
\begin{eqnarray*}\sum_{\substack{1\le l\le m\le
 n\\ l\mid m}}\frac{\Lambda(l)}{l}\frac{\Lambda(m)}m(m,l)=\sum_{p\le n}\sum_{1\le j\le i}\frac{\log
p}{p^i}\frac{\log p}{p^i}p^j\le \sum_{p\le n}\sum_{1\le i}\frac{i\log^2
p}{p^i}
\ll \log^2n,
 \end{eqnarray*}
\begin{eqnarray*}\sum_{\substack{1\le l\le m\le
 n\\ (l,m)=1}}\frac{\Lambda(l)}{l}\frac{\Lambda(m)}m(m,l)\le \left (\sum_{1\le l\le n}\frac{\Lambda(l)}l\right )\left (\sum_{1\le m\le n}\frac{\Lambda(m)}m\right )
\ll \log^2n,
 \end{eqnarray*}
as we wanted to prove.\end{proof}
We finish the proof of Theorem \ref{TH1} by observing that  $V(X)=o(\EE(X)^2)$ when $\delta n\to \infty$, so $X\sim \EE(X)$ asymptotically almost surely.
\section{Chebyshev's function for random sets in $S(n,k)$. Proof of Theorem \ref{TH2}}
Let us consider again the random variable $X=\psi(\A)$, but in the model $S(n,k)$. From now on $\EE_k(X)$ and $V_k(X)$ will denote the expected value and the variance of $X$ in this probability space. Clearly, for $s=1,2$ we have
\begin{eqnarray*}
\EE_k(X^s)&=&\frac 1{\binom
nk}\sum_{|\A|=k }\psi^s(\A)\\ V_k(X)&=&\frac 1{\binom nk}\sum_{|\A|=k}\left
(\psi(\A)-\EE_k(X)\right
)^2\end{eqnarray*}
\begin{lemma}\label{k-j}
For $s=1, 2$  and $1\le j<k$ we have that
$$\EE_j(X^s)\le \EE_k(X^s)\le
\EE_j(X^s)+(k^s-j^s)\log^s n.$$
\end{lemma}

\begin{proof} In order to prove the lower bound it is enough to consider the case $j = k-1$. Observe that the function $\psi$ is monotone with respect to inclusion, i.e. $\psi\left(\A\cup \{a\}\right)\geq \psi(\A)$ for any $\A,\{a\}\subseteq [n]$.
  Using this we get
  $$
  \sum_{|A| = k-1}\psi^s(\A) \leq \frac{1}{n-k+1}\sum_{a\in [n]\setminus
  A}\psi^s(\A\cup\{a\}) = \frac{k}{(n-k+1)}\sum_{|A'| = k}\psi^s(\A').
  $$
  Inequality then follows from $\binom n {k-1}= \frac{k}{(n-k+1)} \binom n k$.

For the second inequality  we observe that for any set $\A\in \bnk$
and any partition into two sets $\A=\A'\cup \A''$ with $|\A'|=j,\
|\A''|=k-j$ we have that $\psi(\A)\le \psi(\A')+\psi(\A'')\le
\psi(\A')+(k-j)\log n$. Similarly, \begin{eqnarray*}\psi^2(\A)&\le &
(\psi(\A')+(k-j)\log n)^2\\ &= &\psi^2(\A')+2\psi(\A')(k-j)\log
n+(k-j)^2\log^2n\\ &\le &\psi^2(\A')+2j(k-j)\log^2 n+(k-j)^2\log^2n\\
&= &\psi^2(\A')+(k^2-j^2)\log^2n.\end{eqnarray*} Thus, for $s=1,2$
we have
\begin{eqnarray*} \psi^s(\A)&\le &\binom kj^{-1} \sum_{\substack{\A'\subset \A\\
|\A'|=j}}\left
(\psi^s(\A')+(k^s-j^s)\log^s n\right )\\&\le &\binom kj^{-1}
\Big(\sum_{\substack{\A'\subset \A\\ |\A'|=j}}\psi^s(\A')\Big)+(k^s-j^s)\log^s
n.\end{eqnarray*} Then,
\begin{eqnarray*}\sum_{|\A|=k}\psi^s(\A)&\le &\binom
kj^{-1}\sum_{|\A|=k}\sum_{\substack{\A'\subset \A\\
|\A'|=j}}\psi^s(\A')+\binom
nk(k^s-j^s)\log^s n\\
&= &\binom kj^{-1}\sum_{|\A'|=j}\psi^s(\A')\sum_{\substack{\A'\subset
\A\\
|\A|=k}}1+\binom nk(k^s-j^s)\log^s n\\
&= &\binom kj^{-1}\binom{n-j}{k-j}\sum_{|\A'|=j}\psi^s(\A')+\binom
nk(k^s-j^s)\log^s n\\&= &\frac{\binom nk}{\binom
nj}\sum_{|\A'|=j}\psi^s(\A')+\binom nk(k^s-j^s)\log^s n,
\end{eqnarray*}
and the second inequality holds.
\end{proof}
\begin{prop}\label{Tcomb} For $s=1,2$ we have that
$$\EE_k(X^s)=\EE(X^s)+O(k^{s-1/2}\log^s n)
$$
where $\EE(X^s)$ denotes the expectation of $X^s$  in $
B(n,k/n)$ and $\EE_k(X^s)$ the expectation in $S(n,k)$.
\end{prop}
\begin{proof}
Observe that for $s=1,2$ we have
\begin{eqnarray*}\EE(X^s)-\EE_k(X^s)&=&-
\EE_k(X^s)+\sum_{j=0}^n\left(\frac{k}{n}
\right)^j\left(1-\frac{k}{
n}\right)^{n-j}
\sum_{|\A|=j}\psi^s(\A)\\&=&-\EE_k(X^s)+\sum_{j=0}^n
\left(\frac{k}{n}\right)^j\left(1-\frac{k}{n}\right)^{n-j}\binom
nj\EE_j(X^s)\\
&=&\sum_{j=0}^n\left(\frac{k}{n}\right)^j\left(1-\frac{k}{n}\right)^{
n-j}\binom nj\left
(\EE_j(X^s)-\EE_k(X^s)\right),
\end{eqnarray*}
for $s=1,2$. Using Lemma \ref{k-j} we get
\begin{equation}\label{Ejk}|\EE_k(X^s)-\EE(X^s)|\le \log^sn\sum_{j=0}^n\left(\frac{k}{n}\right)^j\left(1-\frac{k}{n}\right)^{
n-j}\binom nj|j^s-k^s|.\end{equation}
The sum in~\eqref{Ejk} for $s=1$ is $\mathbb{E}(|Y-\mathbb{E}(Y)|)$, where $Y\sim \text{Bin}(n,k/n)$ is the binomial distribution of parameters $n$ and $k/n$. Chauchy-Schwarz inequality for the expectation implies that this quantity is bounded by the standard deviation of the binomial distribution.
 \begin{equation}\label{ss}\sum_{j=0}^n\left(\frac{k}{n}\right)^j\left(1-\frac{k}{n}\right)^{
n-j}\binom nj|j-k|\le   \sqrt{n(k/n)(1-k/n)}\le \sqrt k,\end{equation}
which proves Proposition \ref{Tcomb} for $s=1$.

To estimate the sum in~\eqref{Ejk} for $s=2$, we split the expression in two terms: the sum indexed by $j\leq 2k$ and the one with $j>2k$.
We use  \eqref{ss} to get
\begin{eqnarray*}
\sum_{j\le 2k} \left(\frac{k}{n}\right)^j\left(1-\frac{k}{n}\right)^{
n-j}\binom nj|j^2-k^2|&\le &3k\sum_{j=0}^n\left(\frac{k}{n}\right)^j\left(1-\frac{k}{n}\right)^{
n-j}\binom nj|j-k|\\ &\le & 3k^{3/2}.
\end{eqnarray*}
On the other hand,
\begin{eqnarray*}\sum_{j> 2k} & &\left(\frac{k}{n}\right)^j\left(1-\frac{k}{n}\right)^{
n-j}\binom nj|j^2-k^2|\\
\le& &\sum_{l\ge 2}(l+1)^2k^2\sum_{lk<j\le (l+1)k} \left(\frac{k}{n}\right)^j\left(1-\frac{k}{n}\right)^{
n-j}\binom nj\\
\le & &\sum_{l\ge 2}(l+1)^2k^2\ \mathbb P(Y>lk)
\end{eqnarray*}
where, once again,  $Y\sim \text{Bin}(n,k/n)$. Chernoff's Theorem implies that for any $\epsilon>0$ we have
$$\mathbb P(Y>(1+\epsilon)k)\le e^{-\epsilon^2k/3}.$$ Applying this inequality to $\mathbb P(Y>lk)$ we get
\begin{eqnarray*}\sum_{j> 2k} & &\left(\frac{k}{n}\right)^j\left(1-\frac{k}{n}\right)^{
n-j}\binom nj|j^2-k^2|\\
\le & &\sum_{l\ge 2}(l+1)^2k^2 e^{-(l-1)^2k/3}\ll k^2e^{-k/3}\ll k^{3/2}.
\end{eqnarray*}
\end{proof}
The next corollary proves the first part of Theorem \ref{TH2}.
\begin{cor}\label{coro2} If $k=k(n)<n$ and $k\to \infty$ then
$$\EE_k(X)=k\frac{\log(n/k)}{1-k/n}\left (1+O\left( e^{-C\sqrt{\log k}}\right) \right )$$
\end{cor}
\begin{proof}
Proposition \ref{Tcomb} for $s=1$ and Corollary \ref{coro} imply that $$\EE_k(X)=k\frac{\log(n/k)}{1-k/n}\left (1+O\left(e^{-C\sqrt{\log k}}\right)+O\left( k^{-1/2}\right) \right )$$ and clearly $k^{-1/2}=o\left(e^{-C\sqrt{\log k}}\right)$ when $k\to \infty$.
\end{proof}
To conclude the proof of Theorem~\ref{TH2} we combine Proposition \ref{var} and Proposition \ref{Tcomb} to estimate the variance $V_k(X)$ in $S(n,k)$:
\begin{align*}
 V_k(X)&=\EE_k(X^2)-\EE_k^2(X)\\&=V(X)+\left
(\EE_k(X^2)-\EE(X^2)\right )+\left
(\EE(X)-\EE_k(X)\right )\left
(\EE(X)+\EE_k(X)\right )
\\ &\ll k\log^2n+\left (k^{1/2}\log n\right )\left ( k\log
n\right )
\\ & \ll  k^{3/2}\log^{2}n.
\end{align*}
The second assertion of Theorem \ref{TH2} is a consequence of the estimate  $V_k(X)=o\left (\EE_k^2(X)\right )$ when $k\to \infty$.
\subsection{The case when $k$ is constant}
The case when $k$ is constant and $n\to\infty$ is not relevant for our original motivation but we give a brief analysis for the sake of completeness.  In this case Fern\'{a}ndez and Fern\'{a}ndez \cite{FF} have proved that $\mathbb E_k(\psi(A))=k\log n+C_k+o(1)$ where $C_k=-k+\sum_{j=2}^k\binom kj(-1)^j\frac{\zeta'(j)}{\zeta(j)}$. Actually, they consider the probabilistic model with $k$ independent choices in $\{1,\dots, n\}$, but when $k$ is fixed it does not make big differences because the probability of a repetition  between the $k$ choices is tiny.

It is easy to prove that with probability $1-o(1)$ we have that $\psi(A)\sim k\log n$. To see this we observe that
\[a_1\cdots a_k \prod_{i<j}(a_i,a_j)^{-1}\le \lcm(a_1,\dots, a_k)\le a_1\cdots a_k\le n^k,\] so $\sum_{i=1}^k\log a_i-\sum_{i<j}\log(a_i,a_j)\le \psi(A)\le k\log n$.

Now, let us note that $\mathbb P(a_i\le n/\log n \text{ for some } i=1,\dots,k)\le k/\log n$
and that $\mathbb P((a_i,a_j)\ge \log n)\le \sum_{d>\log n}\mathbb P(d\mid a_i,\ d\mid a_j)\le \sum_{d>\log n}\frac 1{d^2}<\frac 1{\log n}.$
These observations imply that with probability at least  $1-\frac{k+\binom k2}{\log n}$ we have that
$$ k\log n\left (1-O\left (\log \log n/\log n\right )\right )\le \psi(A)\le k\log n.$$
The analysis in the model $B(n,\delta)$ when $\delta n\to  c$ can be done using again Proposition \ref{expectation}.
\begin{eqnarray*}
\EE\left(\psi(A)\right)=n\frac
{\delta\log(\delta^{-1})}{1-\delta}+\delta\sum_{r< n/\log n}
R\left(\frac{n}{r}\right) (1-\delta)^{r-1}+\delta\sum_{n/\log n \le r\le n }
R\left(\frac{n}{r}\right) (1-\delta)^{r-1}
\end{eqnarray*}
We use the estimate $R(x)\ll x/\log x$ in the first sum and the estimate $R(x)\ll x$ in the second one. We have
\begin{eqnarray*}
\EE\left(\psi(A)\right)&=&c\log n+O(1)+O\left (\frac{c}{\log \log n}\sum_{r< \frac n{\log n}}
\frac{(1-\delta)^{r-1}}r\right)+O\left (c\sum_{\frac n{\log n} \le r\le n }
 \frac{(1-\delta)^{r-1}}r \right )\\
 &=&c\log n+O\left (\frac{c\log \delta}{\log \log n}   \right )+O\left (c\log \log n\right )\\
 &=&c\log n(1+o(1)).
\end{eqnarray*}
Of course in this model we cannot expect concentration around the expectation because for example the probability that $A$ is the empty set tends to a positive constant, $\mathbb P(A=\emptyset)\to e^{-c}$, and then $\mathbb P(\psi(A)=0)\to e^{-c}$.

\section{The least common multiple of the values of a polynomial}

Chebyshev's
function could be also generalized to $$\psi_q(n)=\log \lcm \left \{q(k):\ 1\le k,\
1\le q(k)\le n \right\}$$ for a given polynomial $q(x)\in
\mathbb{Z}[x]$ and it is natural to try to obtain the asymptotic
behavior for $\psi_q(n)$. Some progress has been made in this
direction. While the Prime Number Theorem is equivalent to the asymptotic $\psi_q(n)\sim n$ for $q(x)=x$, Paul Bateman noticed that the Prime Number Theorem for arithmetic
progressions could be exploited to obtain the asymptotic estimate when
$q(x)=a_1x+a_0$ is a linear polynomial and proposed it as a problem \cite{B} in the American Mathematical Monthly:
\begin{equation*}\label{linear}\psi_q(n)\sim \frac{n}{a_1}\frac
m{\phi(m)}\sum_{\substack{1\le l\le m\\(l,m)=1}}\frac 1l
,\end{equation*}
where $m=a_1/(a_1,a_0)$. The first author~\cite{C} has extended this
result to quadratic polynomials. For a given irreducible quadratic
polynomial $q(x)=a_2x^2+a_1x+a_0$ with $a_2>0$ the
following asymptotic estimate holds:
\begin{equation}\label{cille1}
\psi_q(n) = \frac 12\left(n/a_2\right)^{1/2}\log
\left(n/a_2\right)
+B_q\left(n/a_2\right)^{1/2}+o(n^{1/2}),
\end{equation}
where the constant $B_q$ depends only on $q$. In the particular
case of $q(x)=x^2+1$, he got $\psi_q(n)=\frac 12n^{1/2}\log
n+B_qn^{1/2}+o(n^{1/2})$ with
$$ B_q=\gamma-1-\frac{\log
2}{2}-\sum_{p\neq 2}\frac{(-1)^{\frac{p-1}2}\log p}{p-1}, $$
where $\gamma$ is the Euler constant and the sum is considered over all odd prime
numbers. It has  been proved \cite{RSZ} that the error
term in \eqref{cille1} for $q(x)=x^2+1$ is $O\left(n^{1/2}\left(\log n\right)^{-4/9+\epsilon}\right)$
for each $\epsilon>0$. When $q(x)$ is a reducible polynomial the
behavior is, however, different. In this case it is known (see Theorem 3
in~\cite{C}) that:
\begin{equation*}\label{red}\psi_q(n)\sim c
n^{1/2}\end{equation*} where $c$ is an explicit constant depending
only on $q$. For example for $q(x)=x^2-1$ the constant is $c=1$.

The asymptotic behavior of $\psi_q(n)$ remains unknown for
irreducible polynomials of higher degree.
\begin{conj}[Cilleruelo \cite{C}]\label{co} Let $q(x)$ be an irreducible polynomial of degree $d\ge 3$. Then
\begin{equation}\psi_q(n)\sim (1-1/d)\left(n/a_d\right)^{1/d}\log
\left(n/a_d\right),\end{equation}
where $a_d>0$ is the coefficient of $x^d$ in $q(x)$.\end{conj} For example, this
conjecture would imply $\psi_q(n)\sim  \frac 23 n^{1/3}\log n$ for
$q(x)=x^3+2$.

We observe that $\psi_q(n)=\psi(A_{q,n})$ where $A_{q,n}:=\{q(k):\ 1\le k,\ 1\le q(k)\le n\}$. It is natural to wonder whether for a given polynomial $q(x)$  the asymptotic
 $\EE_k(X)\sim \psi_q(n)$ holds, when $n\to
\infty$, where $k=|A_{q,n}|$ and $X=\psi(A)$ for a random set $A$ of $k$ elements in $\{1,\dots,n\}$.

However, consider for example the polynomials $q(x)=x^2-1$ and $q(x)=x^2+1$. In both cases $|A_{q,n}|\sim \sqrt n$ but the asymptotic behaviors of $\psi_q(n)$ are distinct: \begin{equation*}\psi_q(n)\sim \begin{cases}\quad \sqrt n \qquad \quad \text{ when } q(x)=x^2-1\\ \frac 12\sqrt n\log n \quad\text{ when } q(x)=x^2+1.\end{cases}\end{equation*} So, what is the typical behavior of $\psi(A)$ when $|A|\sim \sqrt n$? Is it like in the reducible case or like in the irreducible one? Maybe neither of them represent the typical behavior of a random set.

This question was the original motivation of this work. Theorem \ref{TH2} with $k=|A_{q,n}|=\sqrt{n/a_2}+O(1)$ gives $$\mathbb E_k(X)=k\frac{\log(n/k)}{1-k/n}\left (1+O\left( e^{-C\sqrt{\log k}}\right) \right )=\frac 12(n/a_2)^{1/2}\log (n/a_2)+o\left (n^{1/2}\right ).  $$
This shows that, when $q(x)$ is an irreducible quadratic polynomial, the asymptotic behavior of $\psi_q(n)$ coincides with $\psi(A)$, for almost all sets of  size $|A_{q,n}|$. Theorem \ref{TH2} also supports Conjecture \ref{co} for any $q(x)=a_dx^d+\cdots +a_0$  irreducible polynomial of degree $d\ge 3$.

Nevertheless, there
are some differences in the second term. For example,  if
$q(x)=x^2+1$, we have
$$
\psi_q(n)=\frac 12n^{1/2}\log n+B_qn^{1/2}+o(n^{1/2}),$$
for $B=-0.06627563..$. On the other hand,  Theorem \ref{TH2} implies that in corresponding model $S(n,k)$ with $k=|\A_{q,n}|=\lfloor \sqrt{n-1}\rfloor$ we have that
$$\psi(A)=\frac 12n^{1/2}\log n+o(n^{1/2})$$
almost surely. In other words, when $q(x)$ is an irreducible quadratic polynomial, the asymptotic behavior of $\psi_q(n)$ is the same that $\psi(A)$ in the corresponding model $S(n,k)$. But, the second term is not typical unless $B_q=0$. Probably $B_q\ne 0$ for any irreducible quadratic polynomial $q(x)$ but we have not found a proof.

\subsubsection*{Acknowledgments:} This work was supported by grants MTM
2011-22851 of MICINN, ICMAT Severo Ochoa project SEV-2011-0087 and Research
Council of Lithuania Grant No. MIP-068/2013/LSS-110000-740.
Part of this work was made during third author's visit at Universidad
Aut\'{o}noma de Madrid, he would like to thank the people of the Mathematics Department and
especially Javier Cilleruelo for their warm hospitality. The authors
are also grateful to Surya Ramana and Javier Cárcamo for useful
comments, and especially to Ben Green whose suggestions improved the clarity and exposition of the results.

\end{document}